\documentclass[11pt]{article}
\usepackage{amsmath,amssymb,amsthm,booktabs}
\usepackage[margin=1in]{geometry}
\usepackage[hidelinks]{hyperref}
\newtheorem{theorem}{Theorem}[section]
\newtheorem{lemma}[theorem]{Lemma}
\newtheorem{proposition}[theorem]{Proposition}
\newtheorem{corollary}[theorem]{Corollary}
\newtheorem{conjecture}[theorem]{Conjecture}
\theoremstyle{definition}
\newtheorem{definition}[theorem]{Definition}
\theoremstyle{remark}
\newtheorem{remark}[theorem]{Remark}
\newcommand{\pmin}{p_{\min}}
\newcommand{\Sing}{\mathfrak S}
\newcommand{\Li}{\mathrm{Li}}
\newcommand{\li}{\mathrm{li}}
\newcommand{\Rsemi}{R^{\mathrm{semi}}}

\title{Matrix $B$: Exact Prime Counting, Algebraic Row Correspondences,\\ and Semiprime Distributions}
\author{Wujie Shi\\[4pt]
\small Department of Mathematics, Chongqing University of Arts and Sciences,\\[-2pt]
\small Chongqing 402160, P.R. China;\\[-2pt]
\small School of Mathematical Sciences, Suzhou University, Suzhou 215006, P.R. China\\[6pt]
\small arXiv:2605.21529v5 (replaces v4)}
\date{}

\begin{document}
\maketitle

\noindent\emph{Mathematics Subject Classification (2020):} 11N05, 11N35; 11A41, 11P32, 11Y11.\\
\emph{Keywords:} Matrix $B$, prime counting, semiprime, Polignac conjecture, Goldbach conjecture, sieve methods, Dirichlet series, Brun sieve, OEIS A020483, OEIS A002373.

\begin{abstract}
Matrix $B=(b_{ij})$ with $b_{ij}=(2j+1)(2j+2i-1)$ was introduced in \cite{Shi2024} as an additive sieve for odd primes. In this paper we develop both analytic and algebraic approaches to matrix $B$. On the analytic side, we prove the exact prime counting formula $P_k=N_k-S_k+E_k$ for the intervals $I_k=((2k-1)^2,(2k+1)^2)$, yielding the equivalent combinatorial condition $P_k\ge1\iff E_k>S_k-N_k$. We verify $P_k\ge1$ for all $k\le 999{,}999{,}999$ using the prime-gap tables of Oliveira e Silva, Herzog and Pardi, and for $k_0\le k\le 5.49\times10^{11}$ via the Baker--Harman--Pintz theorem, where $k_0$ denotes the inexplicit threshold in that theorem. On the algebraic side, we establish a natural injection $\varphi_i$ from the $(i+1)$-th row into the $i$-th row and clarify its structural content: the map does not preserve semiprimality; its correct consequence is a shared-factor relationship between adjacent-row entries. We prove a closed-form evaluation of the row Dirichlet series at $s=1$. For semiprime distributions, we show that Row 1 trivially contains infinitely many semiprimes; that for $i\ge2$ the question is equivalent to Polignac's conjecture for gap $2(i-1)$; that by Chen's theorem every row contains infinitely many entries with at most three prime factors; and that by the Maynard--Tao theorem with the Polymath refinement at least one row $i_0\le124$ contains infinitely many semiprimes.

\medskip\noindent\textbf{New in v4.} We introduce the minimal-anchor function $\pmin(d)$, the least odd prime $p$ such that $p+d$ is prime (sequence A020483 of the OEIS at index $d/2$), whose finiteness for all even $d$ is exactly the weak Polignac (Maillet) conjecture, i.e.\ the statement that every row of $B$ contains a semiprime. We prove that for each fixed $z$ the set $\{d\ \text{even}:\pmin(d)\le z\}$ has density zero, with the asymptotic $(\pi(z)-1)X/\log X$; consequently no fixed finite set of anchor primes can cover a positive proportion of the rows. Density-one coverage of the rows nevertheless holds, by a classical theorem of Lavrik which we restate in the matrix-$B$ framework: almost every row contains the number of semiprimes predicted by the Hardy--Littlewood conjecture. We formulate quantitative conjectures on $\pmin$ and support them with numerical data. Every-row coverage (= weak Polignac) is explicitly left open.

\medskip\noindent\textbf{New in v5.} We prove unconditional lower bounds for $\pmin$: for every $\psi\to0$, $\pmin(d)>\psi(d)\log d\log\log d$ for almost all even $d$, which is the conjectured typical order; and $\max_{d\le X}\pmin(d)\ge(\frac12+o(1))\log X\log\log X$. The same counting gives the corresponding lower bounds for the least prime in a Goldbach partition (OEIS A002373).
\end{abstract}

\section{Introduction}
\subsection{Background and motivation}
In \cite{Shi2024} the author introduced a new addition-based sieve for prime numbers, centered on the infinite matrix $B=(b_{ij})$,
\[
b_{ij}=(2j+1)(2j+2i-1)=4j^2+4ij+2i-1,\qquad i,j=1,2,3,\dots
\]
Every odd composite appears in $B$ at least once; an odd integer is prime if and only if it is absent from $B$. While the original approach in \cite{Shi2024} was analytic-combinatorial, this paper develops a systematic algebraic theory of matrix $B$. We discover that the rows of $B$ carry a natural injection structure and that the Dirichlet series of each row admits a clean closed-form evaluation at $s=1$. We also clarify the precise relationship between semiprime distributions in the rows and classical prime-gap conjectures.

\subsection{Main contributions}
\begin{enumerate}
\item Exact prime counting formula (Theorem \ref{thm:exact}): $P_k=N_k-S_k+E_k$, with equivalent condition $P_k\ge1\iff E_k>S_k-N_k$.
\item Partial unconditional results (Theorems \ref{thm:comp}--\ref{thm:bhp}): $P_k\ge1$ for all $k\le999{,}999{,}999$ unconditionally, and for $k_0\le k\le5.49\times10^{11}$ via Baker--Harman--Pintz.
\item Row injection (Lemma \ref{lem:inj}): a natural injection $\varphi_i$ from Row $i+1$ into Row $i$; adjacent-row entries share a common factor (Proposition \ref{prop:shared}).
\item Row value identity (Theorem \ref{thm:rowval}): for $i\ge2$, $F_i(1)=\frac1{2(i-1)}\sum_{k=1}^{i-1}\frac1{2k+1}$.
\item Semiprime distributions (Theorems \ref{thm:row1}--\ref{thm:mtp}): Row 1 has infinitely many semiprimes (trivial); for $i\ge2$ equivalent to Polignac for gap $2(i-1)$; Chen gives near-semiprimes in every row; Maynard--Tao--Polymath gives some row $i_0\le124$ with infinitely many semiprimes.
\item (New in v4.) The minimal-anchor function $\pmin$ (Section \ref{sec:pmin}): for each fixed $z$ the covered set $\{d:\pmin(d)\le z\}$ has density zero with asymptotic $(\pi(z)-1)X/\log X$ (Theorem \ref{thm:density0}); almost every row contains the Hardy--Littlewood-predicted number of semiprimes, by Lavrik's theorem restated in the matrix-$B$ framework (Theorem \ref{thm:lavrik}); quantitative conjectures on $\pmin$ (Conjecture \ref{conj:pmin}) with numerical data (Table \ref{tab:pmin}).
\item (New in v5.) Uniform union bound (Theorem \ref{thm:A}) and unconditional lower bounds for $\pmin$ (Theorems \ref{thm:B}, \ref{thm:C}), with the Goldbach analogue (Theorem \ref{thm:goldbach}); anchor-by-anchor asymptotics (Proposition \ref{prop:new}) and Table \ref{tab:anchor}.
\end{enumerate}

\section{Definitions and the multiplicity function}
\begin{definition}[Matrix multiplicity]
For an odd integer $n\ge3$, define
\[
r(n)=\#\{d: d\mid n,\ 3\le d\le\sqrt n,\ d\ \text{odd}\}.
\]
Since $b_{ij}=n$ requires $2j+1=d$ and $2j+2i-1=n/d$ (both odd, $d\le\sqrt n$), $r(n)$ equals the number of positions in $B$ at which $n$ appears.
\end{definition}

\begin{definition}[Counting functions]
For fixed $k\ge1$, let $I_k=((2k-1)^2,(2k+1)^2)$. Define
\[
N_k=\#\{n\in I_k: n\ \text{odd}\}=4k-1,\qquad S_k=\sum_{\substack{n\in I_k\\ n\ \text{odd}}}r(n),
\]
\[
E_k=\sum_{\substack{n\in I_k,\ n\ \text{odd}\\ r(n)\ge2}}\bigl(r(n)-1\bigr),\qquad P_k=\#\{\text{odd primes in }I_k\}.
\]
\end{definition}

\section{The exact formula}
\begin{theorem}[Exact prime-counting formula via matrix $B$]\label{thm:exact}
For every integer $k\ge1$,
\[
P_k=N_k-S_k+E_k=(4k-1)-S_k+E_k.
\]
\end{theorem}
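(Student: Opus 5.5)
The plan is to reduce the identity to a pointwise statement about each odd $n\in I_k$ and then sum over the interval. First I will record the count $N_k=4k-1$: the odd integers strictly between $(2k-1)^2=4k^2-4k+1$ and $(2k+1)^2=4k^2+4k+1$ are $4k^2-4k+3,\dots,4k^2+4k-1$, and there are $\frac{8k-4}{2}+1=4k-1$ of them. Then I will split the odd $n\in I_k$ into two classes: primes, where $r(n)=0$, and composites, where $r(n)\ge1$.

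The key step is the characterization that an odd $n\ge3$ satisfies $r(n)\ge1$ exactly when $n$ is composite. If $n$ is an odd composite, write $n=ab$ with $1<a\le b$. Then $a\le\sqrt n$, and $a$ is odd, so $a\ge3$; hence $a$ is counted in $r(n)$. Conversely, if $n$ is prime, its only divisors are $1$ and $n$, and $n>\sqrt n$, so no admissible $d$ exists and $r(n)=0$. Since $(2k-1)^2\ge1$, every $n\in I_k$ satisfies $n\ge3$, so the definition applies throughout. (For $k=1$ the interval contains $3,5,7$, all prime, which is consistent.)

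Next I will use the elementary identity $r(n)-\max(r(n)-1,0)=\mathbf 1[r(n)\ge1]$. Summing it over the odd $n\in I_k$ gives
\[
S_k-E_k=\sum_{\substack{n\in I_k\\ n\ \text{odd}}}\mathbf 1[r(n)\ge1]=\#\{\text{odd composites in }I_k\},
\]
because $E_k$ is exactly the sum of $\max(r(n)-1,0)$ over these $n$ (terms with $r(n)\le1$ contribute zero). Subtracting from $N_k$ leaves the number of odd $n\in I_k$ with $r(n)=0$. By the characterization above, that is the number of odd primes in $I_k$, namely $P_k$.

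I do not expect a real obstacle. The only point needing care is the characterization step: it must be checked that the smaller factor of an odd composite is odd and at least $3$, and that the boundary case $d=\sqrt n$, which occurs for squares of odd numbers, is included by the non-strict inequality $d\le\sqrt n$. The equivalence $P_k\ge1\iff E_k>S_k-N_k$ then follows immediately by rearranging the identity.
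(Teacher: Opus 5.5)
Your proof is correct and is essentially the paper's argument. The paper groups the odd $n\in I_k$ by the value $m=r(n)$ and sums $a_m\bigl(1-m+\max(m-1,0)\bigr)$, which is your pointwise identity summed over the interval. You additionally verify $N_k=4k-1$ and that $r(n)=0$ holds exactly when $n$ is prime, both of which the paper takes for granted.
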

\begin{proof}
Let $a_m=\#\{n\in I_k: n\ \text{odd},\ r(n)=m\}$ for $m=0,1,2,\dots$. Then $N_k=\sum_m a_m$, $S_k=\sum_m m\,a_m$, and $E_k=\sum_{m\ge2}(m-1)a_m$. Therefore
\[
\sum_m a_m-\sum_m m\,a_m+\sum_{m\ge2}(m-1)a_m=a_0=P_k,
\]
since $r(n)=0$ if and only if $n$ is an odd prime in $I_k$.
\end{proof}

\section{The equivalent combinatorial condition}
\begin{theorem}[Equivalent condition for $P_k\ge1$]
For $k\ge1$, $P_k\ge1\iff E_k>S_k-N_k$.
\end{theorem}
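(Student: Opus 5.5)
This is an immediate corollary of Theorem~\ref{thm:exact}, so I will derive it by direct rearrangement rather than by any new counting. For every $k\ge1$ the exact formula gives
\[
P_k=N_k-S_k+E_k=E_k-(S_k-N_k).
\]
All of $P_k,N_k,S_k,E_k$ are nonnegative integers, and $P_k$ is one of them.

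\emph{Forward direction.} Assume $P_k\ge1$. Then $E_k-(S_k-N_k)=P_k\ge1>0$, so $E_k>S_k-N_k$.

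\emph{Reverse direction.} Assume $E_k>S_k-N_k$. Then $P_k=E_k-(S_k-N_k)$ is a strictly positive integer, and therefore $P_k\ge1$.

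No step here is a genuine obstacle. The one point to state explicitly is that ``$>0$'' upgrades to ``$\ge1$'' because every quantity involved is integer-valued. This holds since $N_k=4k-1$, $S_k$ is a finite sum of the integer multiplicities $r(n)$, and $E_k$ is a finite sum of the integers $r(n)-1$. As a remark, the same identity gives more than the stated equivalence: the surplus $E_k-(S_k-N_k)$ equals the number of odd primes in $I_k$ exactly, not merely a lower bound for it.
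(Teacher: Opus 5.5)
Your proposal is correct and follows the paper's own proof: rearrange the exact formula of Theorem~\ref{thm:exact} as $P_k=E_k-(S_k-N_k)$, then use integrality to turn $P_k>0$ into $P_k\ge1$. Strictly, only the integrality of $P_k$ itself is needed for that step.
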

\begin{proof}
Immediate from Theorem \ref{thm:exact}, since all quantities are integers.
\end{proof}

\section{Partial results: known ranges for $P_k\ge1$}
\begin{theorem}[Computationally verified range]\label{thm:comp}
For all $1\le k\le999{,}999{,}999$, the interval $I_k=((2k-1)^2,(2k+1)^2)$ contains at least one prime.
\end{theorem}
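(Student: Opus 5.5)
We argue through maximal prime gaps, as the abstract indicates: for $k\le 999{,}999{,}999$ the whole interval $I_k$ lies below $(2\cdot 10^9)^2=4\times10^{18}$, a range where Oliveira e Silva, Herzog and Pardi have computed every maximal prime gap. The mechanism is simple. Let $p$ be the largest prime with $p\le(2k-1)^2$, and let $p'$ be the next prime. If $p'-p<8k-2$, then $p'<p+8k-2\le(2k-1)^2+8k-2=(2k+1)^2-1$. Also $p'>(2k-1)^2$, since $p$ is the largest prime not exceeding that square and the square itself is not prime. So $p'\in I_k$, and hence $P_k\ge1$.

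The key steps, in order, are these.
\begin{enumerate}
\item Note that $(2k+1)^2-(2k-1)^2=8k$, so it suffices to show that every prime gap $g(p)=p'-p$ with $p<(2k-1)^2$ satisfies $g(p)<8k-2$.
\item Use the tabulated maximal gaps: every prime $p<4\times10^{18}$ has $g(p)\le1476$, the record gap 1476 occurring at $p=1425172824437699411\approx1.4\times10^{18}$.
\item For $k\ge185$ we have $8k-2\ge1478>1476$, so step 1 applies to every such $k\le999{,}999{,}999$, because $(2k-1)^2<4\times10^{18}$ there.
\item For $1\le k\le184$, check directly, or via the small maximal-gap records, that $I_k$ contains a prime. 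A finer variant uses the record table itself: $g(p)<8k-2$ holds as soon as $\sqrt p$ is moderately large, since all recorded maximal gaps satisfy $g\le \log^2 p$, which is far smaller than $8k\approx4\sqrt p$.
\end{enumerate}
In step 4, the cases $k=1,2,3$ give $I_1=(1,9)\ni3$, $I_2=(9,25)\ni11$, $I_3=(25,49)\ni29$, and the rest are routine.

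The main obstacle is making the appeal to the gap tables precise rather than any conceptual difficulty. We must cite the verified bound correctly, including the exact range covered, namely $4\times10^{18}$, which is exactly why the statement stops at $k=999{,}999{,}999$ (since $(2\cdot999{,}999{,}999+1)^2<4\times10^{18}$). We also have to handle the uniform comparison with $8k-2$ at small $k$. For that, the first thing to try is a comparison with the smallest $k$ above which the global bound 1476 suffices, followed by a finite explicit check below it. An alternative for the small $k$ is to cite the OEIS maximal-gap record table (A002386/A005250) and compare each record gap with $4\sqrt p$.
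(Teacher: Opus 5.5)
Your proof is correct and is essentially the paper's argument: the Oliveira e Silva--Herzog--Pardi bound $1476$ on prime gaps below $4\times10^{18}$ is smaller than the length $8k$ of $I_k$ once $k\ge185$, and the cases $k\le184$ are checked directly (the paper argues contrapositively, with a prime-free $I_k$ forcing a gap $\ge 8k$). Two cosmetic slips do not affect the argument: $(2k-1)^2+8k-2=(2k+1)^2-2$ rather than $(2k+1)^2-1$, and your aside that every record gap satisfies $g\le\log^2 p$ fails for very small $p$ (e.g.\ $p=7$, where $g=4>\log^2 7$), though you only invoke it for large $p$.
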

\begin{proof}
Oliveira e Silva, Herzog and Pardi \cite{OSHP} have determined all maximal prime gaps below $4\times10^{18}$; the largest gap between consecutive primes up to that limit is $1476$. Suppose $I_k$ contained no prime with $(2k+1)^2\le4\times10^{18}$, i.e.\ $k\le999{,}999{,}999$. Then there would exist consecutive primes $p\le(2k-1)^2<(2k+1)^2\le p'$, giving a gap $p'-p\ge8k$. For $k\ge185$ we have $8k\ge1480>1476$, a contradiction. For $1\le k\le184$ the assertion is checked directly.
\end{proof}

\begin{theorem}[Baker--Harman--Pintz range]\label{thm:bhp}
There exists a constant $k_0$, not explicitly computed in \cite{BHP}, such that $P_k\ge1$ for all $k_0\le k\le5.49\times10^{11}$.
\end{theorem}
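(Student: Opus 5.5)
The plan is to apply the Baker--Harman--Pintz theorem \cite{BHP} directly. It states that there is an effective but inexplicit $x_0$ such that for all $x\ge x_0$ the interval $[x,x+x^{0.525}]$ contains a prime. I would take $x=(2k-1)^2$. The interval $I_k$ has length $(2k+1)^2-(2k-1)^2=8k$, so it suffices to check that
\[
x^{0.525}=(2k-1)^{1.05}<8k
\]
and that $x+x^{0.525}<(2k+1)^2$. Then the prime supplied by BHP lies strictly inside $I_k$, and it is odd since $x\ge9$. Define $k_0$ as the least $k$ with $(2k-1)^2\ge x_0$. This is exactly the inexplicit threshold in the statement.

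The key step is the elementary inequality $(2k-1)^{1.05}<8k$. Writing $(2k-1)^{1.05}<(2k)^{1.05}=2^{1.05}k^{1.05}$, it is enough to have $2^{1.05}k^{0.05}<8$, i.e.
\[
k^{0.05}<2^{1.95}, \qquad\text{i.e.}\qquad k<2^{39}\approx5.497\times10^{11}.
\]
This explains the upper limit $5.49\times10^{11}$: the crude exponent $0.525$ gives a prime in an interval of length $x^{0.525}$, while $I_k$ has length only $8k\asymp x^{1/2}$. So the comparison holds only while $k^{0.05}$ stays bounded by $2^{1.95}$. For the stated range I would check the inequality at the endpoint $k=5.49\times10^{11}<2^{39}$ and note monotonicity in $k$. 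A slightly sharper comparison using $2k-1$ instead of $2k$ would extend the range a little, but the stated bound suffices.

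The main obstacle is conceptual rather than technical. BHP's $x_0$ is not explicit, so nothing guarantees $k_0\le5.49\times10^{11}$, and the range could be empty. The statement is worded to allow this, so no further work is needed there, and it is worth remarking on it. One should also note that combining with Theorem~\ref{thm:comp} gives no new unconditional information unless $k_0$ is known. The only other care point is the boundary: BHP concerns a closed interval, whereas $I_k$ is open. Strict inequality at the right end follows from $x^{0.525}<8k$, and the left endpoint $(2k-1)^2$ is a square, hence not prime, so the prime is interior.
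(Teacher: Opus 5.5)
Your argument is correct and is essentially the paper's proof. Apply Baker--Harman--Pintz with $x$ at an endpoint of $I_k$, compare $x^{21/40}$ with the length $8k$ (giving the threshold $k\approx2^{39}\approx5.4976\times10^{11}$), and use that the endpoint is a square, so the prime lies strictly inside $I_k$.

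The only difference is the endpoint. The paper anchors at $x=(2k+1)^2$ with the interval $[x-x^{21/40},x]$, which is the form BHP actually prove. Your forward interval $[x,x+x^{0.525}]$ follows from their theorem only with an extra $O(x^{0.05})$ in length, which is harmless here. Citing BHP directly at $(2k+1)^2$, as the paper does, needs only $(2k+1)^{1.05}\le 8k$ and loses nothing in the range.
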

\begin{proof}
By \cite{BHP} there is an $x_0$ such that for all $x\ge x_0$ the interval $[x-x^{21/40},x]$ contains a prime. Take $x=(2k+1)^2$. Since $x$ is odd and composite, the prime lies in $I_k$ provided $x^{21/40}\le8k$, i.e.\ $\bigl((2k+1)^2\bigr)^{21/40}\le8k$, which holds precisely for $k\le5.498\times10^{11}$. Setting $k_0$ so that $(2k_0+1)^2\ge x_0$ completes the proof.
\end{proof}

\begin{remark}
The two ranges are complementary but, strictly speaking, do not yet join: Theorem \ref{thm:comp} is unconditional and exhaustive up to $k\le999{,}999{,}999$, while Theorem \ref{thm:bhp} applies from the inexplicit threshold $k_0$ onward. If $x_0\le4\times10^{18}$ --- an inequality not established in \cite{BHP} --- the ranges overlap and $P_k\ge1$ holds for all $k\le5.49\times10^{11}$. Beyond $5.49\times10^{11}$ the interval $I_k$, of length $8k\sim4\sqrt x$, is shorter than $x^{21/40}$ and lies outside the reach of current unconditional results; even the Riemann Hypothesis, giving prime gaps $O(\sqrt x\log x)$, falls short by a factor of $\log x$.
\end{remark}

\section{Algebraic structure of matrix $B$: row correspondences}
\subsection{Row-wise interpretation}
For fixed row index $i\ge1$, set $u=2j+1$. Then $b_{ij}=u(u+2i-2)$, $u\ge3$ odd, so the $i$-th row consists of all products of two odd numbers with fixed difference $d=2i-2$. In particular: Row 1 ($d=0$): odd squares $\{9,25,49,\dots\}$; Row 2 ($d=2$): $u(u+2)$; Row $i$: $u(u+2i-2)$. An entry $b_{ij}=p(p+2i-2)$ is a semiprime in Row $i$ if and only if both $p$ and $p+2i-2$ are prime.

\subsection{Row injection and shared factor structure}
\begin{lemma}[Row injection]\label{lem:inj}
For each $i\ge1$, the map
\[
\varphi_i:\{b_{i+1,j}\}\to\{b_{i,j}\},\qquad \varphi_i(b_{i+1,j})=b_{i,j+1},
\]
is well-defined and injective. Its image is $\{b_{i,j}: j\ge2\}$.
\end{lemma}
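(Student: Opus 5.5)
The plan is to treat $\varphi_i$ first as a map on positions $(i+1,j)\mapsto(i,j+1)$ and then transfer it to row values. The one genuine point is well-definedness: since $\varphi_i$ is specified on the \emph{values} $b_{i+1,j}$, we must know that each value in Row $i+1$ arises from a unique column index $j$.

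\textbf{Well-definedness.} By the row-wise interpretation, $b_{i+1,j}=(2j+1)(2j+2i+1)$. This is a product of two positive factors, each strictly increasing in $j$, so $j\mapsto b_{i+1,j}$ is strictly increasing. Hence every element of $\{b_{i+1,j}\}$ determines its column index $j$ uniquely, and the assignment $b_{i+1,j}\mapsto b_{i,j+1}$ is a function. Equivalently, $b_{ij}=4j^2+4ij+2i-1$ has positive increments in $j$ for fixed $i$. Without this step the definition could in principle be ambiguous, because a value may appear in several rows. Within a single row, however, it appears only once, and that settles the matter.

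\textbf{Injectivity.} The same monotonicity applies to Row $i$: $j\mapsto b_{i,j}$ is strictly increasing. So if $b_{i,j+1}=b_{i,j'+1}$, then $j=j'$, and therefore $b_{i+1,j}=b_{i+1,j'}$. Thus $\varphi_i$ is the composite of two bijections, $b_{i+1,j}\mapsto j$ and $j\mapsto j+1$, followed by the injective map $j+1\mapsto b_{i,j+1}$.

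\textbf{Image.} As $j$ runs over $j\ge1$, the shifted index $j+1$ runs over exactly $j\ge2$, so the image is $\{b_{i,j}:j\ge2\}$. The first entry $b_{i,1}=3(2i+1)$ is the only entry of Row $i$ that is missed.

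For use in Proposition~\ref{prop:shared}, I would also record the explicit factorisation
\[
\varphi_i(b_{i+1,j})=b_{i,j+1}=(2j+3)(2j+2i+1).
\]
Comparing this with $b_{i+1,j}=(2j+1)(2j+2i+1)$ shows that the two entries share the factor $2j+2i+1$.

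The only step needing care is well-definedness, and the monotonicity argument handles it. Everything else is direct substitution.
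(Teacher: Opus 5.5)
Your proof is correct and follows the paper's argument: strict monotonicity of $j\mapsto b_{i+1,j}$ and of $j\mapsto b_{i,j}$ gives well-definedness and injectivity, and the index shift $j\mapsto j+1$ gives the image $\{b_{i,j}:j\ge2\}$. The paper compresses all of this into the remark that both sequences strictly increase in $j$; you simply spell out the well-definedness step that this remark implicitly covers.
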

\begin{proof}
$b_{i+1,j}=(2j+1)(2j+2i+1)$ and $b_{i,j+1}=(2j+3)(2j+2i+1)$. Both sequences strictly increase in $j$, so the map is injective; its image is $\{b_{i,j+1}:j\ge1\}=\{b_{i,j}:j\ge2\}$.
\end{proof}

\begin{remark}
The map $\varphi_i$ is an injection of index sets: it sends the $j$-th entry of Row $i+1$ to the $(j+1)$-th entry of Row $i$, and maps different values to different values. It does not give a subset relation on value sets, and does not preserve semiprimality. The correct structural content is captured by Proposition \ref{prop:shared}.
\end{remark}

\begin{proposition}[Shared factor structure]\label{prop:shared}
For each $i,j\ge1$, the entries $b_{i+1,j}$ and $b_{i,j+1}$ share the common odd factor $2j+2i+1$:
\[
b_{i+1,j}=(2j+1)\cdot(2j+2i+1),\qquad b_{i,j+1}=(2j+3)\cdot(2j+2i+1).
\]
Consequently, if $p$, $p+2$, and $p+2i$ are simultaneously prime (with $p=2j+1$), then $b_{i+1,j}=p(p+2i)$ is a semiprime in Row $i+1$ and $b_{i,j+1}=(p+2)(p+2i)$ is a semiprime in Row $i$; adjacent rows simultaneously gain a semiprime whenever a prime triple $(p,p+2,p+2i)$ exists.
\end{proposition}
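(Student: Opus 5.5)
The plan is to verify the two factorizations directly from the defining formula and then read off the semiprime consequence.

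First, substitute into $b_{ij}=(2j+1)(2j+2i-1)$. With row index $i+1$ and column $j$ we get $b_{i+1,j}=(2j+1)(2j+2(i+1)-1)=(2j+1)(2j+2i+1)$. With row index $i$ and column $j+1$ we get $b_{i,j+1}=(2(j+1)+1)(2(j+1)+2i-1)=(2j+3)(2j+2i+1)$. Both products contain the odd factor $2j+2i+1$, which is exactly the computation already used in the proof of Lemma \ref{lem:inj}. This settles the displayed identities and the shared-factor claim.

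For the consequence, set $p=2j+1$, so that $p\ge3$ is odd. Then $2j+3=p+2$ and $2j+2i+1=p+2i$, which gives $b_{i+1,j}=p(p+2i)$ and $b_{i,j+1}=(p+2)(p+2i)$. If $p$, $p+2$ and $p+2i$ are all prime, each entry is a product of two primes, hence a semiprime. This agrees with the row-wise criterion of Section 6.1: in Row $i+1$ the difference is $2(i+1)-2=2i$, and in Row $i$ it is $(p+2i)-(p+2)=2i-2$. Note that for $i=1$ the two primes $p+2$ and $p+2i$ coincide, so $b_{1,j+1}=(p+2)^2$. This is still a semiprime, since $\Omega=2$, consistent with Row 1 consisting of odd squares.

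There is no real obstacle here. The only point needing care is the index bookkeeping, namely checking that $2(j+1)+2i-1=2j+2i+1$, together with the degenerate case $i=1$ noted above.
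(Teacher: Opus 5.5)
Your proof is correct and matches the paper: its justification is the same direct substitution into $b_{ij}=(2j+1)(2j+2i-1)$ used in the proof of Lemma \ref{lem:inj}, after which the semiprime consequence is read off by setting $p=2j+1$. Your check of the degenerate case $i=1$, where $b_{1,j+1}=(p+2)^2$, is a small detail the paper leaves implicit.
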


\subsection{The Dirichlet series of each row}
Define
\[
F_i(s)=\sum_{j\ge1}b_{ij}^{-s}=\sum_{j\ge1}\bigl((2j+1)(2j+2i-1)\bigr)^{-s},\qquad \operatorname{Re}(s)>\tfrac12.
\]
For Row 1, $F_1(s)=\sum_{j\ge1}(2j+1)^{-2s}=(1-2^{-2s})\zeta(2s)-1$.

\begin{theorem}[Row value identity]\label{thm:rowval}
For every integer $i\ge2$,
\[
F_i(1)=\frac1{2(i-1)}\sum_{k=1}^{i-1}\frac1{2k+1}.
\]
\end{theorem}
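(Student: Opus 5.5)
The approach is partial fractions followed by a telescoping sum. Fix $i\ge2$ and write $d=2i-2>0$. For each $j\ge1$ the two factors of $b_{ij}$ differ by $d$, so
\[
\frac{1}{(2j+1)(2j+2i-1)}=\frac{1}{d}\left(\frac{1}{2j+1}-\frac{1}{2j+2i-1}\right)=\frac{1}{2(i-1)}\left(\frac{1}{2j+1}-\frac{1}{2j+2i-1}\right).
\]
Summing over $j$ should then collapse almost everything, leaving only the first $i-1$ odd reciprocals starting at $3$.

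To make the telescoping rigorous, first truncate at $J$, since the individual series $\sum 1/(2j+1)$ diverge. Set $a_m=1/(2m+1)$, so that $2j+2i-1=2(j+i-1)+1$ and each term is $\frac{1}{2(i-1)}(a_j-a_{j+i-1})$. Then
\[
\sum_{j=1}^{J}(a_j-a_{j+i-1})=\sum_{j=1}^{i-1}a_j-\sum_{j=J+1}^{J+i-1}a_j .
\]
This is a standard shift-by-$(i-1)$ telescoping identity, valid for $J\ge i-1$.

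The tail $\sum_{j=J+1}^{J+i-1}a_j$ has exactly $i-1$ terms, each at most $1/(2J+3)$, so it tends to $0$ as $J\to\infty$. The original series has positive terms and is $O(\sum j^{-2})$, so it converges, and its value equals the limit of the partial sums. This gives $F_i(1)=\frac{1}{2(i-1)}\sum_{k=1}^{i-1}\frac{1}{2k+1}$, as claimed.

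There is no real obstacle here. The only care needed is to avoid splitting the series into two divergent pieces before truncating, and to check the index bookkeeping: the surviving terms are $k=1,\dots,i-1$, giving $1/3,\dots,1/(2i-1)$. The case $i=1$ is excluded because $d=0$ and no partial fraction decomposition exists. As a sanity check, $i=2$ gives $\sum_j \frac{1}{(2j+1)(2j+3)}=\frac12\cdot\frac13=\frac16$, which is correct.
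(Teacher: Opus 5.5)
Your proof is correct and follows the same route as the paper: partial fractions with denominator difference $2(i-1)$, then telescoping under the index shift $k=j+i-1$. Your truncation at $J$ is slightly more careful than the paper's write-up, which cancels the two divergent series $\sum_{j\ge1}\frac1{2j+1}$ and $\sum_{k\ge i}\frac1{2k+1}$ directly without first passing to partial sums.
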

\begin{proof}
Partial fractions:
\[
\frac1{(2j+1)(2j+2i-1)}=\frac1{2(i-1)}\Bigl(\frac1{2j+1}-\frac1{2j+2i-1}\Bigr).
\]
Summing over $j\ge1$ and substituting $k=j+i-1$ gives $\sum_{j\ge1}\frac1{2j+2i-1}=\sum_{k=i}^\infty\frac1{2k+1}$. Hence the telescoping difference is $\sum_{j=1}^{i-1}\frac1{2j+1}$.
\end{proof}

\begin{corollary}
$F_i(1)\to0$ as $i\to\infty$; more precisely $F_i(1)\sim\dfrac{\log i}{4(i-1)}$.
\end{corollary}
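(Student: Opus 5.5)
The plan is to start from the closed form in Theorem~\ref{thm:rowval},
\[
F_i(1)=\frac{1}{2(i-1)}\,H^{\mathrm{odd}}_{i},\qquad H^{\mathrm{odd}}_{i}:=\sum_{k=1}^{i-1}\frac1{2k+1},
\]
so that both claims follow from the asymptotic behaviour of the odd harmonic sum $H^{\mathrm{odd}}_i$. The key step is to express $H^{\mathrm{odd}}_i$ through ordinary harmonic numbers $H_n=\sum_{m\le n}1/m$.

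Splitting the reciprocals up to $2i-1$ by parity gives
\[
\sum_{k=0}^{i-1}\frac1{2k+1}=H_{2i-1}-\tfrac12 H_{i-1}.
\]
Removing the $k=0$ term yields
\[
H^{\mathrm{odd}}_i=H_{2i-1}-\tfrac12H_{i-1}-1.
\]
Next I insert $H_n=\log n+\gamma+O(1/n)$, which gives
\[
H^{\mathrm{odd}}_i=\log(2i)-\tfrac12\log i+\tfrac12\gamma-1+O(1/i)=\tfrac12\log i+O(1).
\]

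Substituting back,
\[
F_i(1)=\frac{\tfrac12\log i+O(1)}{2(i-1)}=\frac{\log i}{4(i-1)}\bigl(1+O(1/\log i)\bigr),
\]
which is the asserted equivalence $F_i(1)\sim \log i/(4(i-1))$. The limit $F_i(1)\to0$ then follows immediately, since $\log i/i\to0$.

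An alternative that avoids harmonic-number asymptotics is an integral comparison: because $1/(2x+1)$ is decreasing,
\[
\int_1^{i}\frac{dx}{2x+1}\le H^{\mathrm{odd}}_i\le \int_0^{i-1}\frac{dx}{2x+1},
\]
and both bounds equal $\tfrac12\log i+O(1)$. There is no real obstacle here. The only step needing care is the bookkeeping in the parity split, namely the index shift and excluding the $k=0$ term $1$. Those details affect only the $O(1)$ constant, which is why the $\sim$ statement is insensitive to them.
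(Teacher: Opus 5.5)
Your proposal is correct and is essentially the argument the paper leaves implicit (it states the corollary without proof): combine the closed form of Theorem~\ref{thm:rowval} with $\sum_{k=1}^{i-1}\frac{1}{2k+1}=\tfrac12\log i+O(1)$. Either your parity split into harmonic numbers or your integral comparison establishes that estimate, and from it both $F_i(1)\to0$ and $F_i(1)\sim\frac{\log i}{4(i-1)}$ follow.
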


\begin{remark}
The proof shows $F_i(1)-F_{i-1}(1)$ is a finite closed form, but this is specific to $s=1$; for $s\ne1$ the difference is an infinite series not reducible by telescoping.
\end{remark}

\subsection{Semiprime distributions in the rows}
\begin{theorem}[Row 1]\label{thm:row1}
Row 1 contains infinitely many semiprimes, namely $\{p^2: p\ \text{odd prime}\}$.
\end{theorem}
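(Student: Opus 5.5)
The plan is to read off Row 1 explicitly from the row-wise interpretation of Section 6.1 and then count. Setting $i=1$ in $b_{ij}=(2j+1)(2j+2i-1)$ gives $b_{1j}=(2j+1)^2$. So Row 1 is exactly the set of odd squares $u^2$ with $u\ge3$ odd, and as $j$ runs over $j\ge1$ the base $u=2j+1$ runs over all odd integers $u\ge3$, each exactly once.

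Next I will pin down which entries are semiprimes. I take a semiprime to mean a product of exactly two primes counted with multiplicity, which is the convention implicit in Section 6.1, where $p(p+2i-2)$ with $i=1$ gives $p^2$. If $u=p$ is an odd prime, then $b_{1j}=p^2$ has $\Omega(p^2)=2$, so it is a semiprime. Conversely, if $u$ is odd and composite, then $\Omega(u^2)=2\Omega(u)\ge4$, so $u^2$ is not a semiprime. Hence the semiprimes in Row 1 are precisely $\{p^2: p\ \text{odd prime}\}$, each occurring at position $j=(p-1)/2$.

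For infinitude, the map $p\mapsto p^2$ is injective on odd primes, and by Euclid there are infinitely many odd primes. Therefore Row 1 contains infinitely many distinct semiprimes.

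There is no genuine obstacle here. The only point needing care is to state the semiprime convention explicitly, since it must count $p^2$, and to include the converse direction so that the set is identified exactly rather than merely shown to contain the prime squares.
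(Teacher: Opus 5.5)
Your proposal is correct and follows essentially the same route as the paper: set $i=1$ to get $b_{1j}=(2j+1)^2$, observe that $p^2$ is a semiprime when $2j+1=p$ is prime, and invoke Euclid. Your converse step ($\Omega(u^2)=2\Omega(u)\ge4$ for odd composite $u$) is a small addition the paper leaves implicit, and it is what justifies the word ``namely'' in the statement.
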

\begin{proof}
$b_{1,j}=(2j+1)^2$; when $p=2j+1$ is prime, $p^2=p\cdot p$ is a semiprime. Euclid's theorem gives infinitely many such $p$.
\end{proof}

\begin{theorem}[Equivalence with Polignac's conjecture]\label{thm:polignac}
For $i\ge2$, Row $i$ contains infinitely many semiprimes if and only if there are infinitely many primes $p$ with $p+2(i-1)$ prime (Polignac for gap $d=2(i-1)$).
\end{theorem}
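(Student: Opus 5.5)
The plan is to establish an explicit bijection between semiprime entries of Row $i$ and primes $p$ for which $p+2(i-1)$ is also prime. Infinitude of one set will then transfer to the other. Write $d=2(i-1)\ge2$. By the row-wise interpretation above, Row $i$ is $\{u(u+d): u\ge3\ \text{odd}\}$, with $u=2j+1$ and $j\ge1$. Since $u\mapsto u(u+d)$ is strictly increasing, distinct $j$ give distinct entries. So it suffices to compare the set $\{u\ge3 \text{ odd}: u(u+d)\ \text{semiprime}\}$ with the set $\{p: p,\ p+d\ \text{prime}\}$.

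\emph{Step 1 (a semiprime entry forces prime factors).} Suppose $b_{ij}=u(u+d)$ is a semiprime, i.e.\ it has exactly two prime factors counted with multiplicity. Both factors satisfy $u\ge3$ and $u+d\ge5$, so neither equals $1$, and each contributes at least one prime factor. Hence each contributes exactly one, so $u$ and $u+d$ are both prime. Here $d>0$, so $u\ne u+d$. The map $j\mapsto p=u$ therefore sends semiprime entries injectively to primes $p\ge3$ with $p+d$ prime.

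\emph{Step 2 (the converse).} Suppose $p$ and $p+d$ are prime. Since $d\ge2$, we have $p+d\ge4$, so $p+d$ is an odd prime. If $p=2$, then $p+d$ would be even and at least $4$, which is impossible. So $p$ is an odd prime, $p\ge3$, and $p=2j+1$ with $j\ge1$. Then $b_{ij}=p(p+d)$ is a product of two primes, hence a semiprime, as already noted after the row-wise interpretation.

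\emph{Step 3 (conclusion).} Steps 1--2 give a bijection between semiprime entries of Row $i$ and primes $p$ with $p+2(i-1)$ prime. Infinitely many of one is therefore equivalent to infinitely many of the other.

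There is no serious obstacle. The only points needing care are excluding $p=2$ and using $i\ge2$ so that $d>0$. This assumption is what distinguishes the statement from the Row 1 case of Theorem~\ref{thm:row1}, where the factors coincide.
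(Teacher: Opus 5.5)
Your proof is correct and takes the same approach as the paper, whose one-line proof observes that a semiprime in Row $i$ is exactly $p\,(p+2(i-1))$ with both factors prime, occurring at $j=(p-1)/2$. Your version makes explicit the details the paper leaves implicit: both factors exceed $1$, so the semiprime condition forces each to be prime; $p=2$ is excluded because $d\ge2$ is even; and distinct $j$ give distinct entries, so the correspondence is a bijection.
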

\begin{proof}
A semiprime of Row $i$ is exactly $p\cdot(p+2(i-1))$ with both factors prime; such a product occurs at $j=(p-1)/2$.
\end{proof}

\begin{remark}
Theorem \ref{thm:polignac} shows Row $i$'s semiprime infinitude is governed solely by prime pairs of gap $2(i-1)$. The injection $\varphi_i$ does not allow propagation between rows: existence in Row $i$ neither implies nor is implied by that in Row $i\pm1$. Resolving every individual row $i\ge2$ is equivalent to Polignac for every even gap, which is open.
\end{remark}

\begin{theorem}[Chen's theorem applied to rows]\label{thm:chen}
For each $i\ge1$, Row $i$ contains infinitely many entries $b_{ij}$ with $\Omega(b_{ij})\le3$.
\end{theorem}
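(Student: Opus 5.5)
The plan is to reduce the statement to the generalized form of Chen's theorem for an arbitrary even gap. The case $i=1$ is trivial: by Theorem \ref{thm:row1}, Row 1 contains $p^2$ for every odd prime $p$, and $\Omega(p^2)=2\le3$. So I assume from now on that $i\ge2$ and set $h=2(i-1)$, which is a positive even integer. By the row-wise description, $b_{ij}=u(u+h)$ with $u=2j+1\ge3$ odd, and every odd $u\ge3$ arises from exactly one $j$. Since $\Omega$ is completely additive, $\Omega(b_{ij})=\Omega(u)+\Omega(u+h)$. It therefore suffices to find infinitely many odd $u$ with $u$ prime and $\Omega(u+h)\le2$.

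For this I invoke Chen's theorem in the form Chen proved alongside the twin-prime case: for every fixed even $h\ne0$ there are infinitely many primes $p$ such that $p+h$ is either a prime or a product of two primes. The standard references are Chen (1973) and Halberstam--Richert, \emph{Sieve Methods}, Chapter 11, where the result is stated for an arbitrary even shift. Taking $u=p$ for such primes with $p\ge3$, I get $u$ odd, $\Omega(u)=1$ and $\Omega(u+h)\le2$. This gives $\Omega(b_{ij})\le3$ at $j=(p-1)/2$.

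Distinct primes $p$ give distinct indices $j$. Since Row $i$ is strictly increasing in $j$, they also give distinct entries, so there are infinitely many such entries, as the statement requires. The main step is importing Chen's theorem for a general even gap rather than only $h=2$. I need to check that the cited version is uniform in the shift, or at least valid for each fixed even $h$; the latter suffices here, since $i$ is fixed. No other obstacle is expected; the remaining steps are bookkeeping with the factorization $b_{ij}=u(u+2i-2)$.
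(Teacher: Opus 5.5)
Your proposal is correct and follows essentially the paper's argument: fix the even shift $d=2(i-1)$, apply Chen's theorem for that shift, and take $j=(p-1)/2$, so that $\Omega(b_{ij})=\Omega(p)+\Omega(p+d)\le 3$. The only difference is cosmetic: you handle $i=1$ separately via Theorem \ref{thm:row1}, while the paper lets $d=0$ pass through the same line, where $p+d=p$ is trivially prime; you also check explicitly that distinct primes give distinct entries, which the paper leaves implicit.
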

\begin{proof}
Fix $d=2(i-1)$. By Chen's theorem \cite{Chen}, infinitely many primes $p$ have $p+d$ prime or a $P_2$-number. Setting $j=(p-1)/2$, the entry $b_{ij}=p(p+d)$ has $\Omega(p)=1$ and $\Omega(p+d)\le2$.
\end{proof}

\begin{theorem}[Maynard--Tao--Polymath]\label{thm:mtp}
There exists an integer $i_0$ with $2\le i_0\le124$ such that Row $i_0$ contains infinitely many semiprimes.
\end{theorem}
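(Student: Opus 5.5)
The argument is a direct reduction to the bounded-gaps theorem, combined with Theorem \ref{thm:polignac} and a pigeonhole step. By Maynard's theorem with the Polymath refinement, $\liminf_{n\to\infty}(p_{n+1}-p_n)\le246$. Hence there are infinitely many pairs of consecutive primes $(p_n,p_{n+1})$ with gap $p_{n+1}-p_n\le246$.

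The pigeonhole step needs the set of possible gap values. Discard the single pair $(2,3)$ and consider only odd primes. For odd $p_n$ the gap is even and positive, so it lies in the finite set $\{2,4,\dots,246\}$, which has $123$ elements. Infinitely many pairs distributed among $123$ values force some even $d_0\le246$ to occur as $p_{n+1}-p_n$ for infinitely many $n$. In particular there are infinitely many primes $p$ with $p+d_0$ prime; consecutiveness is not needed.

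Next I translate the gap into a row index. Define $i_0$ by $d_0=2(i_0-1)$, that is, $i_0=d_0/2+1$. Since $2\le d_0\le246$, this gives $2\le i_0\le124$. By Theorem \ref{thm:polignac}, for each such $p$ the entry $b_{i_0,(p-1)/2}=p(p+d_0)$ is a semiprime in Row $i_0$. Distinct $p$ give distinct entries because the row is strictly increasing in $j$. Therefore Row $i_0$ contains infinitely many semiprimes.

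No step here is genuinely hard once the bound $246$ is quoted; the mathematical depth lies entirely in the Maynard--Tao--Polymath theorem. The only points needing care are bookkeeping. First, one must exclude the gap $d=0$, which corresponds to Row 1. Second, one must make sure the index range matches the statement exactly: $d_0=2$ gives $i_0=2$ and $d_0=246$ gives $i_0=124$. Finally, the result is ineffective as to which row works, since the pigeonhole step only guarantees that some $d_0$ in the range occurs infinitely often.
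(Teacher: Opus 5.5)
Your proposal is correct and follows essentially the same route as the paper, which takes a fixed even $d\le246$ with infinitely many prime pairs $(p,p+d)$ from Maynard--Tao--Polymath and sets $i_0=d/2+1$. Your pigeonhole step over the $123$ possible even gaps just makes explicit how the $\liminf$ bound yields such a fixed $d$, a step the paper leaves implicit.
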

\begin{proof}
By \cite{Maynard,Polymath} there is a fixed even $d\le246$ with infinitely many prime pairs $(p,p+d)$. Set $i_0=d/2+1$; then $2\le i_0\le124$ and Row $i_0$ has infinitely many entries $p(p+d)$.
\end{proof}

\begin{remark}
The bounded-gap theorem gives some row with infinitely many semiprimes, but does not identify it, nor propagate to all rows $i_0\le124$. Each row individually remains open, and equals Polignac for the corresponding gap.
\end{remark}

\subsection{The minimal-anchor function $\pmin$ and density coverage of rows (new in v4)}\label{sec:pmin}
For an even gap $d$, whether Row $(d/2)+1$ contains a semiprime $p(p+d)$ is governed by the following quantity.

\begin{definition}[Minimal anchor]\label{def:pmin}
For even $d\ge2$ let
\[
\pmin(d)=\min\{p\ \text{odd prime}: p+d\ \text{prime}\},
\]
with $\pmin(d)=\infty$ if no such prime exists. Thus Row $(d/2)+1$ contains a semiprime if and only if $\pmin(d)<\infty$, and the weak Polignac (Maillet) conjecture --- every even number is a difference of two primes --- is exactly the statement $\pmin(d)<\infty$ for all even $d$. The function $\pmin$ is sequence A020483 of the OEIS (at index $d/2$); it has been computed for $d/2<5\times10^{11}$, the largest value in that range being $\pmin(d)=3307$ at $d/2=248281210271$ \cite[comment of J. K. Andersen]{OEIS}.
\end{definition}

The first result shows that no fixed finite anchor set can cover more than a vanishing proportion of rows: for each fixed $z$, the set of gaps covered by anchors $p\le z$ has density zero, with a precise asymptotic.

\begin{theorem}[Fixed anchors cover a density-zero set]\label{thm:density0}
Fix $z\ge3$. Then, as $X\to\infty$,
\[
\#\{d\le X\ \text{even}:\pmin(d)\le z\}=\bigl(\pi(z)-1\bigr)\frac{X}{\log X}\bigl(1+o(1)\bigr).
\]
In particular, $\{d:\pmin(d)\le z\}$ has density zero among the even numbers for every fixed $z$.
\end{theorem}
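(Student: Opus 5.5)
The set $\{d\le X \text{ even}:\pmin(d)\le z\}$ is a finite union over the odd primes $p\le z$ of
\[
A_p(X)=\{d\le X\ \text{even}: p+d\ \text{prime}\}.
\]
I will get the asymptotic from the prime number theorem for each $A_p$ separately, plus inclusion--exclusion for the overlaps. There are $\pi(z)-1$ odd primes $p\le z$, which matches the constant in the statement.

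\textbf{Step 1 (single anchors).} For a fixed odd prime $p$, the map $d\mapsto p+d$ is a bijection from $A_p(X)$ onto the primes $q$ with $p+2\le q\le p+X$; these are all odd, so $d=q-p$ is automatically even. Hence
\[
\#A_p(X)=\pi(X+p)-\pi(p+1)=\frac{X}{\log X}\bigl(1+o(1)\bigr),
\]
by the prime number theorem, since $p\le z$ is fixed.

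\textbf{Step 2 (overlaps).} By inclusion--exclusion,
\[
\sum_p\#A_p-\sum_{p<p'}\#(A_p\cap A_{p'})\ \le\ \#\bigcup_p A_p\ \le\ \sum_p\#A_p .
\]
The upper bound already gives $(\pi(z)-1)\frac{X}{\log X}(1+o(1))$. For the lower bound it suffices to show that each pairwise intersection is $o(X/\log X)$; there are only finitely many pairs since $z$ is fixed.

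For odd primes $p<p'$, an element of $A_p\cap A_{p'}$ gives $q=p+d$ and $q+h$ both prime, where $h=p'-p$ is a fixed positive even number and $q\le X+z$. A standard upper-bound sieve (Brun or Selberg) gives
\[
\#\{q\le Y: q,\ q+h\ \text{prime}\}\ll_h \frac{Y}{(\log Y)^2}.
\]
This is $o(X/\log X)$, so each overlap is negligible and the asymptotic follows.

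\textbf{Step 3 (density zero).} There are $X/2+O(1)$ even $d\le X$. So the proportion of them with $\pmin(d)\le z$ is
\[
\asymp \frac{2(\pi(z)-1)}{\log X}\longrightarrow 0,
\]
which gives density zero.

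The only non-elementary input, and the main obstacle, is the twin-type upper bound in Step 2, which I would cite from Brun's sieve. Any bound of the form $o(Y/\log Y)$ would suffice, so even a crude Brun pure-sieve estimate giving $Y(\log\log Y)^2/(\log Y)^2$ is enough.
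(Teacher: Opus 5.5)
Your proposal is correct and follows essentially the same route as the paper: you write $\{\pmin(d)\le z\}$ as the union of the sets $A_p$ over odd primes $p\le z$, evaluate each $|A_p|$ by the prime number theorem, and bound the finitely many pairwise overlaps by the sieve upper bound $O(X/\log^2X)$ for prime pairs with fixed gap $p'-p$. Your further remark is also right: a Brun pure-sieve bound with a $(\log\log)^2$ loss would already suffice, since only $o(X/\log X)$ is needed for the overlaps.
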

\begin{proof}
For an odd prime $p\le z$ put $A_p=\{d\le X\ \text{even}: p+d\ \text{prime}\}$. As $d$ runs over the even numbers up to $X$, $p+d$ runs over all odd integers in $(p,p+X]$, so by the prime number theorem
\[
|A_p|=\pi(X+p)-\pi(p)=\frac{X}{\log X}\bigl(1+o(1)\bigr).
\]
For distinct odd primes $p<q\le z$, membership in $A_p\cap A_q$ makes both $m=p+d$ and $m+(q-p)$ prime with $m\le X+p$; by the standard sieve upper bound for prime pairs (\cite[Theorem 3.11]{HR}), $|A_p\cap A_q|\ll\Sing(q-p)\,X/\log^2X$, where $\Sing$ denotes the singular series of Remark \ref{rem:sing}. Since $z$ is fixed there are $O(1)$ pairs, so inclusion--exclusion gives
\[
\Bigl|\bigcup_{2<p\le z}A_p\Bigr|=\sum_{2<p\le z}|A_p|+O\Bigl(\frac{X}{\log^2X}\Bigr)=\bigl(\pi(z)-1\bigr)\frac{X}{\log X}\bigl(1+o(1)\bigr).
\]
The event $\pmin(d)\le z$ is exactly $d\in\bigcup_{2<p\le z}A_p$; the prime 2 is never an anchor, since $2+d$ is even.
\end{proof}

\begin{remark}[Limits of anchor methods]\label{rem:limits15}
Theorem \ref{thm:density0} delimits what anchor methods can do. A first-moment bound already caps the covered set at $O(X/\log X)$, and no second-moment refinement can exceed its own first moment. Density-one coverage therefore forces the anchor bound to grow with $d$ --- indeed $z\gg\log d\log\log d$ is forced for almost all $d$, see Theorem \ref{thm:B} --- but lower bounds for growing anchors require lower bounds for prime pairs in the sifted range, and thus meet the parity obstruction of sieve theory. Density-one coverage of the rows is nevertheless a theorem, by a different and older route, which we now recall.
\end{remark}

\begin{theorem}[Lavrik; density-one coverage of rows]\label{thm:lavrik}
Let $\pi_d(y)=\#\{p\le y: p+d\ \text{prime}\}$ and let $\Sing(d)$ be the singular series of Remark \ref{rem:sing}. For every $A>0$ there is a $B>0$ such that the Hardy--Littlewood asymptotic
\[
\pi_d(y)=\Sing(d)\,\Li_2(y)\Bigl(1+O\bigl(\log^{-B}y\bigr)\Bigr),\qquad \Li_2(y)=\int_2^y\frac{dt}{\log^2t},
\]
holds for all even $d\le y$ outside an exceptional set of size $O_A\bigl(y\log^{-A}y\bigr)$; see \cite{Lavrik}, and \cite[Theorem 1]{Baier} for an elementary proof with an explicit quantitative form. Consequently, for all but $O_A(X\log^{-A}X)$ even $d\le X$, Row $(d/2)+1$ of matrix $B$ contains not merely one semiprime $p(p+d)$ with $p\le X$ but $\gg X/\log^2X$ of them, in the quantity predicted by Hardy--Littlewood. Almost every row of $B$ is covered, quantitatively.
\end{theorem}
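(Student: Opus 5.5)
The plan is to take the Hardy--Littlewood asymptotic for almost all $d$ as given, citing Lavrik and Baier's Theorem 1. The only thing left to prove is the ``consequently'' clause, which I would deduce in three short steps.

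First, I would fix $A>0$, take the corresponding $B$, and apply the cited result with $y=X$. This gives an exceptional set $\mathcal E_X$ of even $d\le X$ with $|\mathcal E_X|=O_A(X\log^{-A}X)$. For every even $d\le X$ outside $\mathcal E_X$ we then have
\[
\pi_d(X)=\Sing(d)\,\Li_2(X)\bigl(1+O(\log^{-B}X)\bigr).
\]
One point needs checking: the statement is uniform over all even $d\le y$, and here $d\le X=y$, so it applies as stated.

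Second, I would bound the main term from below. For even $d$, the singular series satisfies
\[
\Sing(d)=2C_2\prod_{p\mid d,\,p>2}\frac{p-1}{p-2}\ \ge\ 2C_2>0,
\]
where $C_2$ is the twin-prime constant (Remark~\ref{rem:sing}). Also $\Li_2(X)\sim X/\log^2X$. For $X$ large the factor $1+O(\log^{-B}X)$ is at least $\tfrac12$, so $\pi_d(X)\gg X/\log^2X$, with an absolute implied constant, for every non-exceptional $d$. Small $X$ can be absorbed into the $O_A$ constant.

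Third, I would translate back to matrix $B$. Each prime $p\le X$ with $p+d$ prime is odd, because $p=2$ would make $p+d$ even and greater than 2. By Section 6.1 it gives the entry $b_{ij}=p(p+d)$ in Row $i=(d/2)+1$ at $j=(p-1)/2$, and distinct $p$ give distinct $j$. So for such $d$ the row contains at least $\pi_d(X)-1\gg X/\log^2X$ semiprimes, and this count matches the Hardy--Littlewood prediction $\Sing(d)\Li_2(X)$. Since the exceptional proportion $O(\log^{-A}X)$ tends to $0$, the set of rows with this property has density one.

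The only real obstacle is the analytic input, Lavrik's theorem itself, which I would cite rather than reprove. Everything else is bookkeeping: checking that the uniformity range $d\le y$ covers $d\le X$, and that the lower bound on $\Sing(d)$ is uniform in $d$.
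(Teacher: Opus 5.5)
Your proposal is correct and follows the paper's route. The paper also treats the almost-all Hardy--Littlewood asymptotic as a black box cited from Lavrik and Baier, and leaves the matrix-$B$ consequence implicit. Your write-up just makes that deduction explicit: the uniform bound $\Sing(d)\ge 2\Pi_2$, the asymptotic $\Li_2(X)\sim X/\log^2X$, and the row description $b_{ij}=p(p+d)$ at $j=(p-1)/2$. The subtraction of $1$ in $\pi_d(X)-1$ is unnecessary, since you already showed that $p=2$ never contributes, but it is harmless.
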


\begin{remark}[Singular series and the exceptional set]\label{rem:sing}
Here $\Sing(d)=2\Pi_2\prod_{p\mid d,\,p>2}\frac{p-1}{p-2}$ for even $d$, with $\Pi_2=\prod_{p>2}\bigl(1-\frac1{(p-1)^2}\bigr)=0.6601618\ldots$ the twin prime constant. The exceptional behaviour of $\pmin$ correlates with the arithmetic of $d$: gaps $d$ divisible by several small odd primes force the congruence obstructions $p\equiv-d\pmod r$ on small anchors and push $\pmin(d)$ upward. Numerical data (Table \ref{tab:pmin}) exhibit the slow growth of the median of $\pmin$ on dyadic windows, consistent with the heuristic $\pmin(d)\asymp\Sing(d)^{-1}\log d\cdot\log\log d$ for typical $d$.
\end{remark}

\begin{conjecture}\label{conj:pmin}
(i) $\pmin(d)<\infty$ for every even $d$ (weak Polignac/Maillet; equivalently, every row of $B$ contains a semiprime). (ii) $\pmin(d)\ll_\varepsilon(\log d)^{2+\varepsilon}$; more precisely, a Cram\'er-type computation suggests $\max_{d\le X}\pmin(d)\asymp(\log X)^2\log\log X$, consistent with the recorded maximum $3307$ near $d\approx5\times10^{11}$, where $(\log d)^2\log\log d\approx2.4\times10^3$.
\end{conjecture}

\subsection{Lower bounds for the minimal anchor (new in v5)}\label{sec:lower}

Theorem \ref{thm:density0} shows that a fixed finite set of anchor primes covers only a density-zero set of rows. In this section we make the same counting uniform in the anchor bound $z$ and extract from it unconditional lower bounds for $\pmin$: an almost-all lower bound of the order $\log d\log\log d$, which is the conjectured typical order (Remark \ref{rem:sing}), and a lower bound for the maximum of $\pmin$ over $d\le X$. The argument is elementary; we have not found these lower bounds stated in the literature. The same argument applies verbatim to the least prime in a Goldbach partition (Theorem \ref{thm:goldbach}).

Throughout, for an odd prime $p$ and $X\ge2$ put
\[
A_p(X)=\{\,d\le X\ \text{even}: p+d\ \text{prime}\,\},\qquad |A_p(X)|=\pi(X+p)-\pi(p),
\]
the equality because $p+d$ runs over the odd integers in $(p,p+X]$ as $d$ runs over the even integers in $(0,X]$. The event $\pmin(d)\le z$ is exactly $d\in\bigcup_{2<p\le z}A_p(X)$; the prime $2$ is never an anchor.

\begin{theorem}[Uniform union bound]\label{thm:A}
As $X\to\infty$, uniformly for $3\le z\le X^{1/2}$,
\[
\#\{\,d\le X\ \text{even}:\pmin(d)\le z\,\}\ \le\ (\pi(z)-1)\,\frac{X}{\log X}\,(1+o(1)).
\]
\end{theorem}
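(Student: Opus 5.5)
The plan is a direct first-moment (union) bound with explicit uniformity in $p\le z$. Since the event $\pmin(d)\le z$ is exactly $d\in\bigcup_{2<p\le z}A_p(X)$, subadditivity gives
\[
\#\{d\le X\ \text{even}:\pmin(d)\le z\}\le\sum_{2<p\le z}|A_p(X)|=\sum_{2<p\le z}\bigl(\pi(X+p)-\pi(p)\bigr)\le\sum_{2<p\le z}\pi(X+p).
\]
There are exactly $\pi(z)-1$ summands, so it suffices to show that $\pi(X+p)\le\frac{X}{\log X}(1+o(1))$ uniformly for $p\le z\le X^{1/2}$, with the $o(1)$ independent of $p$ and $z$.

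For this uniform estimate I will use monotonicity of $\pi$. Since $p\le X^{1/2}$, we have $\pi(X+p)\le\pi(X+X^{1/2})$. By the prime number theorem, $\pi(X+X^{1/2})=\frac{X+X^{1/2}}{\log(X+X^{1/2})}(1+o(1))=\frac{X}{\log X}(1+o(1))$. The error term here depends only on $X$, so the bound holds uniformly in every summand. Summing over the $\pi(z)-1$ anchors gives the stated inequality. One could subtract $\pi(p)$ for a marginally sharper bound, but this is unnecessary.

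The main point of care is that the $o(1)$ must not depend on $z$. It does not, because each term is bounded by the single quantity $\pi(X+X^{1/2})$. No inclusion--exclusion or sieve bound for prime pairs is needed, in contrast with Theorem \ref{thm:density0}, since only an upper bound is claimed. The range restriction $z\le X^{1/2}$ serves only to make $X+p\sim X$. Any $z=o(X)$ would work, and for larger $z$ the bound is trivial anyway, since the left side never exceeds $X/2$.
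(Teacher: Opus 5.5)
Your proposal is correct and follows essentially the same argument as the paper. Both bound the union by the sum of the $\pi(z)-1$ sets $|A_p(X)|$, and bound each one by the single quantity $\pi(X+X^{1/2})=\frac{X}{\log X}(1+o(1))$, so that the error term is independent of $p$ and $z$.
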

\begin{proof}
The size of a union is at most the sum of the sizes. For $p\le z\le X^{1/2}$ we have $|A_p(X)|\le\pi(X+X^{1/2})=\frac{X}{\log X}(1+o(1))$ by the prime number theorem, with an error term independent of $p$. There are $\pi(z)-1$ odd primes $p\le z$.
\end{proof}

Theorem \ref{thm:density0} gives an asymptotic equality, but only for fixed $z$, because the overlaps $|A_p\cap A_q|$ are controlled by the sieve upper bound for prime pairs with a constant depending on $z$. Theorem \ref{thm:A} keeps only the inequality, uses no sieve, and is uniform in $z$; this uniformity is all that the next two theorems need.

\begin{theorem}[Almost-all lower bound]\label{thm:B}
Let $\psi(d)>0$ be any function with $\psi(d)\to0$ as $d\to\infty$. Then
\[
\#\{\,d\le X\ \text{even}:\ \pmin(d)\le\psi(d)\,\log d\,\log\log d\,\}=o(X).
\]
In other words, $\pmin(d)>\psi(d)\log d\log\log d$ for almost all even $d$.
\end{theorem}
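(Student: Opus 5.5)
We reduce to Theorem \ref{thm:A} applied with a single anchor bound $z=z(X)$ chosen slightly below $\log X\log\log X$. First, replace the $d$-dependent threshold by an $X$-dependent one. Put $\eta(X)=\sup_{d\ge\sqrt X}\psi(d)$, which tends to $0$ as $X\to\infty$, and replace $\eta$ by $\max(\eta,1/\log\log\log X)$ if necessary, so that $\eta(X)\to0$ but not too fast. The even $d\le\sqrt X$ contribute only $O(\sqrt X)=o(X)$. For $\sqrt X<d\le X$ we have $\psi(d)\log d\log\log d\le\eta(X)\log X\log\log X=:z$. Hence
\[
\#\{d\le X\ \text{even}:\pmin(d)\le\psi(d)\log d\log\log d\}\le O(\sqrt X)+\#\{d\le X\ \text{even}:\pmin(d)\le z\}.
\]
If $z<3$ the second set is empty. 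Otherwise $3\le z\le X^{1/2}$ for large $X$, so Theorem \ref{thm:A} applies uniformly.

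Second, count. Theorem \ref{thm:A} gives the bound $(\pi(z)-1)\frac{X}{\log X}(1+o(1))$. By Chebyshev, or the prime number theorem, $\pi(z)\ll z/\log z$. Since $z\to\infty$ with $\log z=\log\log X+\log\log\log X+\log\eta(X)$, our lower floor on $\eta$ gives $\log z\sim\log\log X$. Therefore
\[
\pi(z)\frac{X}{\log X}\ll\frac{\eta(X)\log X\log\log X}{\log\log X}\cdot\frac{X}{\log X}=\eta(X)X=o(X).
\]
If $\eta$ does not tend to infinity-safely in this sense, monotonicity still covers it: shrinking $\psi$ only shrinks the set, so enlarging $\eta$ as above is harmless. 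Summing the two contributions gives $o(X)$.

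The main point needing care is the passage from the pointwise condition involving $\psi(d)$ to a uniform threshold $z(X)$. This is exactly where the uniformity of Theorem \ref{thm:A} in $z$ is used; the fixed-$z$ Theorem \ref{thm:density0} would not suffice. We must also check that $\log z$ is not much smaller than $\log\log X$, which is why we force $\eta$ to decay slowly. Everything else is the prime number theorem.
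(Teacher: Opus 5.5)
Your proposal is correct and follows essentially the same route as the paper: discard $d\le\sqrt X$, bound the pointwise threshold on $\sqrt X<d\le X$ by the uniform one $z=\bigl(\sup_{d\ge\sqrt X}\psi(d)\bigr)\log X\log\log X$, impose a slowly decaying floor so that $\log z\sim\log\log X$, and then apply the uniform union bound of Theorem \ref{thm:A} with $\pi(z)\ll z/\log z$. The only cosmetic difference is where the floor goes. The paper puts $1/\log\log d$ on $\psi$ itself, while you put $1/\log\log\log X$ on $\eta$; both are legitimate, since enlarging $z$ only enlarges the set being bounded.
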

\begin{proof}
Replacing $\psi(d)$ by $\max\{\psi(d),1/\log\log d\}$ enlarges the set to be counted, so we may assume $\psi(d)\ge1/\log\log d$. Put $\psi^*(X)=\sup_{d>X^{1/2}}\psi(d)$; then $\psi^*(X)\to0$, $\psi^*$ is non-increasing, and $\psi^*(X)\ge1/\log\log X$.

For $X^{1/2}<d\le X$ the inequality $\pmin(d)\le\psi(d)\log d\log\log d$ implies $\pmin(d)\le z$ with $z=z(X)=\psi^*(X)\log X\log\log X$. Since $1/\log\log X\le\psi^*(X)\le1$ we have $\log z=(1+o(1))\log\log X$, so by the prime number theorem
\[
\pi(z)=(1+o(1))\frac{z}{\log z}=\psi^*(X)\log X\,(1+o(1))\le X^{1/2},
\]
and Theorem \ref{thm:A} bounds the number of such $d$ by $\psi^*(X)\,X\,(1+o(1))=o(X)$. The remaining $d\le X^{1/2}$ contribute at most $X^{1/2}=o(X)$.
\end{proof}

\begin{theorem}[Extremal lower bound]\label{thm:C}
As $X\to\infty$,
\[
\max_{d\le X,\ d\ \text{even}}\pmin(d)\ \ge\ \Bigl(\tfrac12+o(1)\Bigr)\log X\,\log\log X,
\]
with the convention that the left side is $+\infty$ if some $\pmin(d)=\infty$. Equivalently, there are infinitely many even $d$ with $\pmin(d)\ge(\frac12+o(1))\log d\log\log d$.
\end{theorem}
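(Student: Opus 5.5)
We argue by counting, using the uniform union bound of Theorem \ref{thm:A}. Fix $\varepsilon>0$ and put $z=z(X)=(\tfrac12-\varepsilon)\log X\log\log X$. If every even $d\le X$ satisfied $\pmin(d)\le z$ (in particular all $\pmin(d)$ finite), then the set $\{d\le X\ \text{even}:\pmin(d)\le z\}$ would consist of all $\lfloor X/2\rfloor$ even numbers up to $X$. Since $z\le X^{1/2}$ for large $X$, Theorem \ref{thm:A} bounds the size of this set by $(\pi(z)-1)\frac{X}{\log X}(1+o(1))$. We will show this is strictly less than $X/2-1$, which gives a contradiction.

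The key estimate is $\pi(z)$. We have $\log z=\log\log X+\log\log\log X+O(1)=(1+o(1))\log\log X$, so by the prime number theorem
\[
\pi(z)=(1+o(1))\frac{z}{\log z}=(1+o(1))\Bigl(\tfrac12-\varepsilon\Bigr)\log X .
\]
Hence the union bound is at most $(\tfrac12-\varepsilon+o(1))X$, and for large $X$ this is below $\lfloor X/2\rfloor$. So some even $d\le X$ has $\pmin(d)>z$, possibly with $\pmin(d)=\infty$. Letting $\varepsilon\to0$ slowly then gives $\max_{d\le X}\pmin(d)\ge(\tfrac12+o(1))\log X\log\log X$.

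For the equivalent formulation we need to pass from a bound at scale $X$ to a bound in terms of $d$ itself. Let $d_X\le X$ be a gap achieving $\pmin(d_X)>z(X)$. Since $\log d\log\log d$ is increasing, $\pmin(d_X)\ge(\tfrac12-\varepsilon)\log d_X\log\log d_X$ automatically. To get infinitely many distinct such $d$, we note that if only finitely many $d$ satisfied the bound, then $\max_{d\le X}\pmin(d)$ would either be bounded, contradicting $z(X)\to\infty$, or infinite. The infinite case is the degenerate convention: if some $\pmin(d)=\infty$, the inequality holds for that $d$, and one can either accept this or restrict to the count above, which actually produces $\gg\varepsilon X$ exceptional $d\le X$, far more than finitely many. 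We will use that stronger count, namely that at least $(\varepsilon+o(1))X$ even $d\le X$ have $\pmin(d)>z(X)$. Those among them with $d>X^{1/2}$ then satisfy $\pmin(d)\ge(\tfrac12-\varepsilon+o(1))\log d\log\log d$, and letting $X\to\infty$ yields infinitely many.

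The main obstacle is purely bookkeeping: the $o(1)$ in $\pi(z)\sim z/\log z$, with $\log z$ replaced by $\log\log X$, must be tracked carefully, since the lower-order term $\log\log\log X$ only helps. We also have to confirm that the constant $\tfrac12$ arises exactly from $\#\{\text{even } d\le X\}\sim X/2$ set against the $X/\log X$ per anchor. No sieve input is needed.
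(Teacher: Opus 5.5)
Your proof is correct and is essentially the paper's argument: you apply the uniform union bound of Theorem~\ref{thm:A} to the set of all even $d\le X$, compare it with the total count $\sim X/2$, and invert $\pi$ via the prime number theorem. The differences are minor. Fixing $z=(\frac12-\varepsilon)\log X\log\log X$ and arguing by contradiction absorbs the paper's separate cases $Z>X^{1/2}$ and $Z=\infty$. Your complement count, giving at least $(\varepsilon+o(1))X$ even $d\le X$ with $\pmin(d)>z$ (a positive proportion), also supplies a rigorous derivation of the ``infinitely many $d$'' formulation, which the paper asserts as equivalent without proof.
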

\begin{proof}
Let $Z$ denote the maximum. If $Z>X^{1/2}$ there is nothing to prove. Otherwise every even $d\le X$ lies in some $A_p(X)$ with $p\le Z$, so by Theorem \ref{thm:A}
\[
\frac X2-1\ \le\ (\pi(Z)-1)\,\frac{X}{\log X}\,(1+o(1)),
\]
whence $\pi(Z)\ge(\frac12+o(1))\log X$ and $Z\ge p_{\lceil\frac12\log X\rceil}=(\frac12+o(1))\log X\log\log X$ by the prime number theorem in the form $p_n\sim n\log n$.
\end{proof}

\begin{theorem}[The least prime in a Goldbach partition]\label{thm:goldbach}
For even $n\ge4$ let $p(n)=\min\{p\ \text{prime}: n-p\ \text{prime}\}$, with $p(n)=\infty$ if $n$ has no Goldbach partition (sequence A002373 of the OEIS; $p(n)$ is the quantity studied by Granville, van de Lune and te Riele \cite{GLR}). Then:
\begin{enumerate}
\item[(i)] uniformly for $2\le z\le X^{1/2}$,\quad $\#\{n\le X\ \text{even}: p(n)\le z\}\le\pi(z)\,\dfrac{X}{\log X}(1+o(1))$;
\item[(ii)] for every $\psi\to0$,\quad $p(n)>\psi(n)\log n\log\log n$ for almost all even $n$;
\item[(iii)] $\displaystyle\max_{n\le X,\ n\ \text{even}}p(n)\ \ge\ \Bigl(\tfrac12+o(1)\Bigr)\log X\log\log X$.
\end{enumerate}
\end{theorem}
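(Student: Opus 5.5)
The argument will follow the proofs of Theorems \ref{thm:A}, \ref{thm:B} and \ref{thm:C}, with the map $d\mapsto p+d$ replaced by $n\mapsto n-p$. For a prime $p$ (now including $p=2$), let
\[
G_p(X)=\{\,n\le X\ \text{even}: n-p\ \text{prime}\,\}.
\]
The event $p(n)\le z$ holds exactly when $n\in\bigcup_{p\le z}G_p(X)$.

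\textbf{Step 1: the analogue of Theorem \ref{thm:A}.} As $n$ runs over the even integers up to $X$, $n-p$ runs over integers of fixed parity in $(-p,X-p]$. Only the positive values $n-p\le X$ can be prime, so
\[
|G_p(X)|\le\pi(X)+1=\frac{X}{\log X}(1+o(1)),
\]
uniformly in $p$. For $p=2$ the condition forces $n-2$ to be an even prime, so $|G_2(X)|\le1$. This bound is even better than the uniform one and causes no harm. The union bound over the $\pi(z)$ primes $p\le z$ then gives (i). There is no need for the restriction $z\le X^{1/2}$ used to control $\pi(X+p)$ in Theorem \ref{thm:A}, because here the shift moves the range downward. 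I will nevertheless keep the stated range.

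\textbf{Step 2: part (ii).} I repeat the proof of Theorem \ref{thm:B} word for word.

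1. Assume $\psi\ge1/\log\log n$, and set $\psi^*(X)=\sup_{n>X^{1/2}}\psi(n)$.
2. For $X^{1/2}<n\le X$, the condition $p(n)\le\psi(n)\log n\log\log n$ implies $p(n)\le z:=\psi^*(X)\log X\log\log X$.
3. Since $\log z\sim\log\log X$, the prime number theorem gives $\pi(z)=(1+o(1))\psi^*(X)\log X$, and this is at most $X^{1/2}$.
4. Part (i) therefore bounds the count by $(1+o(1))\psi^*(X)X=o(X)$.
5. The remaining $n\le X^{1/2}$ contribute $O(X^{1/2})$.

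\textbf{Step 3: part (iii).} I follow the proof of Theorem \ref{thm:C}. Let $Z=\max_{n\le X}p(n)$, where $n$ ranges over even $n$ with $4\le n\le X$. If $Z>X^{1/2}$ we are done. Otherwise every even $n$ with $4\le n\le X$ lies in $\bigcup_{p\le Z}G_p(X)$, so by (i)
\[
\frac X2-1\le\pi(Z)\frac{X}{\log X}(1+o(1)).
\]
Hence $\pi(Z)\ge(\tfrac12+o(1))\log X$, and $p_n\sim n\log n$ gives the bound. The convention $p(n)=\infty$ is handled as before.

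\textbf{Main obstacle.} There is no real obstacle. The only point needing care is the uniformity of the error term in Step 1 across all $p\le z$. This is automatic because every $G_p(X)$ injects into the primes up to $X$, a set that does not depend on $p$.
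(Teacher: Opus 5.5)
Your proposal is correct and is essentially the paper's proof. The paper likewise bounds each shifted set $A'_p(X)=\{n\le X\ \text{even}: n-p\ \text{prime}\}$ by $\pi(X)$ uniformly in $p$, counts $\pi(z)$ anchors (now including $p=2$), and reruns the proofs of Theorems \ref{thm:A}, \ref{thm:B} and \ref{thm:C} unchanged; your extra observations ($|G_2(X)|\le1$, and that the restriction $z\le X^{1/2}$ is not actually needed here) are correct but do not change the argument.
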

\begin{proof}
Put $A'_p(X)=\{n\le X\ \text{even}: n-p\ \text{prime}\}$; then $|A'_p(X)|\le\pi(X)$, and $p(n)\le z$ iff $n\in\bigcup_{p\le z}A'_p(X)$. Here $p=2$ is an admissible anchor (for $n=4$), so the count of anchors is $\pi(z)$. The three proofs above go through unchanged.
\end{proof}

\begin{remark}[Data and conjectures]\label{rem:data}
Dividing the medians of Table \ref{tab:pmin} by $\log D\log\log D$ gives $0.35,0.33,0.34,0.36,0.34,0.38$ for $D=10^4,\dots,10^{14}$: the typical size of $\pmin(d)$ is a constant times $\log d\log\log d$, as the waiting-time model of Remark \ref{rem:sing} predicts (with the constant $\approx\Sing(d)^{-1}$ under Hardy--Littlewood). Theorem \ref{thm:B} proves the lower half of this typical order unconditionally, up to the factor $\psi$. The upper half remains conditional: Lavrik-type theorems control exceptional sets only on the scale $y\ge d$, far above $(\log d)^C$. Theorem \ref{thm:C} is one factor $\log X$ below the conjectured extremal order $(\log X)^2\log\log X$ of Conjecture \ref{conj:pmin}(ii).
\end{remark}

\begin{remark}[Limits of the method]\label{rem:limits}
Theorem \ref{thm:A} is a first-moment (union) bound. To raise Theorem \ref{thm:C} to the scale $(\log X)^2$ one would have to show that the sets $A_p(X)$ overlap substantially, i.e.\ prove \emph{lower} bounds for the number of prime pairs $(m,m+(q-p))$, which is exactly the parity obstruction of sieve theory. Thus $\frac12\log X\log\log X$ is the limit of pure counting, in the same way that the Erd\H os--Rankin bound is the limit of the elementary approach to large prime gaps; an improvement would need a covering construction in the spirit of Ford--Green--Konyagin--Maynard--Tao \cite{FGKMT}, here covering only the primes $p\le z$ rather than a whole interval.
\end{remark}

\begin{proposition}[Rows covered anchor by anchor]\label{prop:new}
For each fixed odd prime $q$,
\[
\#\{\,d\le X\ \text{even}:\pmin(d)=q\,\}=\frac{X}{\log X}\Bigl(1+O_q\bigl(\tfrac1{\log X}\bigr)\Bigr).
\]
\end{proposition}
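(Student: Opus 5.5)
The event $\pmin(d)=q$ says that $d\in A_q(X)$ and $d\notin A_p(X)$ for every odd prime $p<q$. For fixed $q$ there are only $O_q(1)$ such primes $p$, so I would run the argument of Theorem \ref{thm:density0} with $z=q$. By the Bonferroni lower bound and the trivial upper bound,
\[
|A_q(X)|-\sum_{2<p<q}|A_p(X)\cap A_q(X)|\ \le\ \#\{d\le X\ \text{even}:\pmin(d)=q\}\ \le\ |A_q(X)|.
\]
So the count equals $|A_q(X)|$ up to an error of at most $\sum_{p<q}|A_p\cap A_q|$.

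\textbf{Main term.} We have $|A_q(X)|=\pi(X+q)-\pi(q)$. Since $q$ is fixed, the prime number theorem in the form $\pi(x)=\li(x)+O(x\log^{-2}x)$ gives
\[
|A_q(X)|=\frac{X}{\log X}\Bigl(1+O\bigl(\tfrac1{\log X}\bigr)\Bigr).
\]
Shifting by $q$ and subtracting $\pi(q)$ only change lower-order terms, at cost $O_q(1)$.

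\textbf{Overlaps.} For $p<q$, an element $d\in A_p\cap A_q$ gives a prime pair $(m,m+(q-p))$ with $m=p+d\le X+p$. The sieve upper bound for prime pairs \cite[Theorem 3.11]{HR}, already used in Theorem \ref{thm:density0}, yields
\[
|A_p\cap A_q|\ll \Sing(q-p)\,\frac{X}{\log^2X}\ll_q \frac{X}{\log^2X}.
\]
There are fewer than $\pi(q)$ values of $p$, so the total overlap is $O_q(X/\log^2X)$, which is $\frac{X}{\log X}\cdot O_q(1/\log X)$.

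\textbf{Conclusion and main obstacle.} Combining the two bounds gives the stated asymptotic with relative error $O_q(1/\log X)$. The only genuinely nontrivial input, and the main obstacle, is the overlap bound: it needs the Brun/Selberg upper bound for prime pairs with an explicit $1/\log^2X$ saving. Note that a sieve upper bound is all that is required here, since only the lower inequality uses the overlaps, so no parity issue arises. I would also take care to state the prime number theorem with a second-order error term, since the bare asymptotic $\pi(x)\sim x/\log x$ does not give the $O(1/\log X)$ relative error claimed.
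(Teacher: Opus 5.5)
Your proposal is correct and follows the paper's own proof: you bound the count above by $|A_q(X)|$ and below by $|A_q(X)|-\sum_{2<p<q}|A_p(X)\cap A_q(X)|$, and you control each overlap by the sieve upper bound for prime pairs with gap $q-p$. Your insistence on the prime number theorem with a secondary term is a worthwhile refinement, because the paper only records $|A_q(X)|=\frac{X}{\log X}(1+o(1))$, which by itself would not give the claimed relative error $O_q(1/\log X)$.
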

\begin{proof}
The set in question is $A_q(X)\setminus\bigcup_{2<p<q}A_p(X)$. Its size lies between $|A_q(X)|-\sum_{2<p<q}|A_p(X)\cap A_q(X)|$ and $|A_q(X)|$; the first term is $\frac{X}{\log X}(1+o(1))$ and each intersection is $\ll\Sing(q-p)X/\log^2X$ by the sieve bound used in the proof of Theorem \ref{thm:density0}.
\end{proof}

Table \ref{tab:anchor} records the proposition at $X=10^6$: the number of rows $d\le X$ covered by the anchor $q$ (all $\approx X/\log X=72382$), the number of rows for which $q$ is the \emph{minimal} anchor, and the cumulative proportion of the $500{,}000$ even rows covered. The overlap for $q=5$, $78496-70328=8168$, is of the size $\Sing(2)X/\log^2X\approx6900$ predicted by the proof. The ``new rows'' decrease roughly geometrically with ratio $1-1/\log X$: the anchors behave like independent trials with success probability $1/\log X$. In this model, covering all $X$ rows requires $\pi(z)/\log X\approx\log X$ successes on average, i.e.\ $z\approx(\log X)^2\log\log X$; this is the origin of the extremal order in Conjecture \ref{conj:pmin}(ii). At $X=10^6$ the model gives $\approx500$, the true maximum is $\pmin(d)=619$, and Theorem \ref{thm:C} gives $\approx18$.

\setcounter{table}{2}
\begin{table}[ht]
\centering\small
\begin{tabular}{lrrrrrr}
\toprule
$q$ & 3 & 5 & 7 & 11 & 13 & 17\\
\midrule
rows covered by $q$ & 78497 & 78496 & 78495 & 78494 & 78493 & 78492\\
rows with $\pmin=q$ & 78497 & 70328 & 62185 & 48544 & 40972 & 30959\\
cumulative proportion & .157 & .298 & .422 & .519 & .601 & .663\\
\midrule
$q$ & 19 & 23 & 29 & 31 & 37 & 41\\
\midrule
rows covered by $q$ & 78491 & 78490 & 78489 & 78488 & 78489 & 78489\\
rows with $\pmin=q$ & 29896 & 21422 & 16815 & 18187 & 13115 & 10009\\
cumulative proportion & .723 & .766 & .799 & .836 & .862 & .882\\
\bottomrule
\end{tabular}
\caption{Anchor-by-anchor coverage of the even rows $d\le10^6$ (script \texttt{anchor\_table.py}, ancillary).}\label{tab:anchor}
\end{table}

\section{Numerical and computational results}
\setcounter{table}{0}
\subsection{Computational setup}
For each row $i$ ($1\le i\le246$), $d=2i-2$, define
\[
\Rsemi_i(N)=\#\{p\le\sqrt N: p\ \text{odd prime},\ p+d\ \text{prime}\}.
\]
Each such $p$ contributes the semiprime $b_{i,(p-1)/2}=p(p+d)$, of size at most $N+d\sqrt N=N(1+o(1))$ for fixed $i$. The odd restriction matters only for Row 1 ($p=2$ gives $4\notin B$), so $\Rsemi_1(N)=\pi(\sqrt N)-1$. All counts were obtained by exhaustive sieving; the verification scripts and the full 246-row table are provided as ancillary files.

\begin{table}[ht]
\centering\small
\begin{tabular}{rrrrrrr}
\toprule
$i$ & $d=2i-2$ & $R_i(10^6)$ & $R_i(10^7)$ & $R_i(10^8)$ & HL$(10^8)$ & ratio\\
\midrule
1 & 0 & 167 & 445 & 1228 & 1245.1 & 0.986\\
2 & 2 & 35 & 83 & 205 & 214.2 & 0.957\\
3 & 4 & 41 & 90 & 203 & 214.2 & 0.948\\
4 & 6 & 74 & 171 & 411 & 428.4 & 0.959\\
5 & 8 & 38 & 89 & 208 & 214.2 & 0.971\\
16 & 30 & 99 & 227 & 536 & 571.2 & 0.938\\
106 & 210 & 107 & 256 & 641 & 685.5 & 0.935\\
124 & 246 & 70 & 166 & 416 & 439.4 & 0.947\\
211 & 420 & 100 & 252 & 623 & 685.5 & 0.909\\
246 & 490 & 53 & 120 & 314 & 342.7 & 0.916\\
\midrule
Total & --- & 12,667 & 30,222 & 74,418 & --- & ---\\
\bottomrule
\end{tabular}
\caption{Semiprime counts $\Rsemi_i(N)$ for selected rows. HL denotes the Hardy--Littlewood prediction $2\Pi_2\prod_{p\mid d,\,p>2}\frac{p-1}{p-2}\,\li_2(\sqrt N)$ at $N=10^8$ (Row 1: $\li(\sqrt N)$), with $\li_2(x)=\int_2^x dt/\log^2t$ and $\Pi_2=0.6601618\ldots$.}\label{tab:semi}
\end{table}

\subsection{Discussion}
\begin{enumerate}
\item Among rows $i\ge2$, the maximum count occurs at Row 106 ($d=210$), followed by Row 211 ($d=420$): the odd part $105=3\cdot5\cdot7$ maximizes the singular-series factor $\prod\frac{p-1}{p-2}=3.2$ among $d\le490$. Row 4 ($d=6$) is maximal only within the initial rows $i\le5$. This ordering is exactly that predicted by Hardy--Littlewood \cite{HL}.
\item The totals $12{,}667$, $30{,}222$, $74{,}418$ grow by the factor $74{,}418/30{,}222=2.46$, matching the prediction $2.42$ of the $N/\log^2N$ scaling.
\item The ratio $\Rsemi_i(N)/\mathrm{HL}$ lies in $[0.86,1.03]$ across all 246 rows at $N=10^8$ (mean $0.93$) and increases slowly with $N$, consistent with the secondary terms of $\li_2$.
\end{enumerate}

\subsection{The minimal-anchor data (new in v4)}
\begin{table}[ht]
\centering\small
\begin{tabular}{rrrr}
\toprule
window start $D$ & hit rate $\pmin\le13$ & $10/\log D$ & median $\pmin$\\
\midrule
$10^4$ & 0.732 & 1.086 & 7\\
$10^6$ & 0.580 & 0.724 & 13\\
$10^8$ & 0.461 & 0.543 & 17\\
$10^{10}$ & 0.356 & 0.434 & 26\\
$10^{12}$ & 0.307 & 0.362 & 31\\
$10^{14}$ & 0.242 & 0.310 & 43\\
\bottomrule
\end{tabular}
\caption{Sampled even $d$ in windows starting at $D$ (2000--4000 values per window): the proportion with $\pmin(d)\le13$, the union-bound prediction $(\pi(13)-1)\cdot2/\log D=10/\log D$ of Theorem \ref{thm:density0} (accurate once $10/\log D<1$; at small $D$ the union bound saturates), and the median of $\pmin(d)$. The declining hit rate illustrates the density-zero statement of Theorem \ref{thm:density0}; the growing median illustrates Remark \ref{rem:sing} and Theorem \ref{thm:B}. The smallest even $d$ with $\pmin(d)>13$ is $d=112$, where $\pmin(112)=19$. The sampling script is provided as an ancillary file.}\label{tab:pmin}
\end{table}

\section{Open problems}
\begin{enumerate}
\item[(1)] Prove $E_k>S_k-N_k$ for all $k\ge1$ (Legendre's conjecture reformulated via matrix $B$).
\item[(1$'$a)] (New in v4.) Prove $\pmin(d)<\infty$ for all even $d$ (weak Polignac/Maillet; equivalently, every row of $B$ contains a semiprime).
\item[(1$'$b)] (Revised in v5.) It is now known that $\frac12\log X\log\log X\le\max_{d\le X}\pmin(d)$ (Theorem \ref{thm:C}), while Conjecture \ref{conj:pmin}(ii) predicts $\asymp(\log X)^2\log\log X$; determine the true order. Also: can the function $\psi$ in Theorem \ref{thm:B} be replaced by a constant, i.e.\ is $\pmin(d)\gg\log d\log\log d$ for almost all even $d$? The data of Remark \ref{rem:data} suggest the constant $\approx0.35$, which is out of reach of the first-moment method.
\item[(2)] Prove or disprove $E_k>S_k-N_k+1$, which would imply $P_k\ge2$.
\item[(3)] Establish $E_k-(S_k-N_k)\ge ck/\log k$ ($c>0$, all large $k$). By Theorem \ref{thm:exact} this asserts $P_k\gg k/\log k$, the expected order. The formulation $E_k\ge(S_k-N_k)(1+c/\log k)$ used in an earlier version is false for large $k$: $S_k-N_k\gg k\log k$ unconditionally, while Brun--Titchmarsh gives $P_k\ll k/\log k$.
\item[(4)] Adapt the exact formula to half-intervals $((2k-1)^2,(2k)^2)$.
\item[(5)] Extend the formula to general intervals $(n^2,(n+1)^2)$.
\item[(6)] Identify $i_0$: determine which row $i_0\le124$ is guaranteed by Theorem \ref{thm:mtp} to contain infinitely many semiprimes. (Equivalent to identifying a specific even $d\le246$ occurring infinitely often as a prime difference.)
\item[(7)] Polignac via matrix $B$: for each $i\ge2$, prove Row $i$ has infinitely many semiprimes (= Polignac for gap $2(i-1)$, by Theorem \ref{thm:polignac}).
\item[(8)] Prime triples and adjacent rows: can infinitely many $(p,p+2,p+2i)$ be established and used, via Proposition \ref{prop:shared}, for simultaneous semiprime infinitude in adjacent rows?
\item[(9)] Goldbach via row decomposition: use the row structure of $B$ to derive lower bounds for Goldbach representations. (Theorem \ref{thm:goldbach} is a first instance of the same counting applied to both problems.)
\item[(10)] Closed-form evaluation or meromorphic continuation of $F_i(s)$ for general $s$.
\end{enumerate}

\section{Conclusion}
We have developed both analytic and algebraic theories of matrix $B$. On the analytic side, the exact formula $P_k=N_k-S_k+E_k$ and its equivalent combinatorial condition $P_k\ge1\iff E_k>S_k-N_k$ give a precise account of primes in the intervals $((2k-1)^2,(2k+1)^2)$, verified unconditionally for $k\le999{,}999{,}999$ and, up to the threshold in \cite{BHP}, for $k\le5.49\times10^{11}$.

On the algebraic side, the natural injection $\varphi_i$ from Row $i+1$ into Row $i$ encodes a shared-factor structure: entries $b_{i+1,j}$ and $b_{i,j+1}$ always share the factor $2j+2i+1$, and prime triples simultaneously produce semiprimes in adjacent rows. The injection does not, however, preserve semiprimality or allow propagation of semiprime infinitude between rows.

The Dirichlet series $F_i(s)$ of Row $i$ satisfies the closed-form identity $F_i(1)=\frac1{2(i-1)}\sum_{k=1}^{i-1}\frac1{2k+1}$ ($i\ge2$), proved by telescoping after partial fractions.

For semiprime distributions: Row 1 trivially has infinitely many semiprimes (Theorem \ref{thm:row1}); for $i\ge2$, infinitely many semiprimes in Row $i$ is equivalent to Polignac for gap $2(i-1)$ (Theorem \ref{thm:polignac}); every row has infinitely many entries with $\Omega\le3$ by Chen (Theorem \ref{thm:chen}); some row $i_0\le124$ has infinitely many semiprimes by Maynard--Tao--Polymath (Theorem \ref{thm:mtp}).

\paragraph{Update in v4.} The minimal-anchor function $\pmin$ (= OEIS A020483) organizes the row-coverage question. For each fixed $z$, the rows covered by anchors $p\le z$ form a density-zero set with asymptotic $(\pi(z)-1)X/\log X$ (Theorem \ref{thm:density0}), so no fixed finite anchor set covers a positive proportion of rows; density-one coverage nevertheless holds by Lavrik's theorem, restated here in the matrix-$B$ framework (Theorem \ref{thm:lavrik}): almost every row contains the Hardy--Littlewood-predicted number of semiprimes. The gap between ``almost every row'' and ``every row'' is exactly the weak Polignac conjecture, and is explicitly left open. The matrix-$B$ framework thus offers a clean dictionary: Polignac for gap $d\iff$ Row $(d/2)+1$ contains infinitely many semiprimes; weak Polignac $\iff\pmin(d)<\infty$ for all even $d$.

\paragraph{Update in v5.} The union bound behind Theorem \ref{thm:density0}, made uniform in the anchor bound, yields unconditional lower bounds for $\pmin$ (Theorems \ref{thm:B}, \ref{thm:C}): the conjectured typical order $\log d\log\log d$ is a lower bound for almost all $d$ up to a factor tending to zero, and the maximum over $d\le X$ is at least $(\frac12+o(1))\log X\log\log X$. Identical counting gives the same lower bounds for the least prime in a Goldbach partition (Theorem \ref{thm:goldbach}). We hope the row value identity, the shared-factor structure, and the function $\pmin$ will prove useful in the further study of these classical problems.

\paragraph{Acknowledgments.} The author thanks the many students and colleagues who participated in discussions on matrix $B$ and its connections to open problems in prime number theory, and the Polymath project participants for their work on bounded gaps.

\paragraph{Author contributions and AI usage statement.} The mathematical ideas, including the exact prime counting formula, the row injection structure, the algebraic framework for matrix $B$, the introduction of the minimal-anchor function $\pmin$, and the anchor-by-anchor counting of Section \ref{sec:lower}, are the work of the author. Manuscript preparation, computational implementations, numerical verification (including the data of Tables \ref{tab:anchor}, \ref{tab:semi} and \ref{tab:pmin}), literature checking, and the analysis leading to the formulations of Sections \ref{sec:pmin} and \ref{sec:lower} were carried out with the assistance of AI tools under the author's direction; all mathematical statements and proofs were verified by the author. The verification scripts are provided as ancillary files.

\paragraph{Version history.} v1: original submission. v2: corrected a false formulation in Open Problem (3). v3: Table \ref{tab:semi} recomputed in full; Theorem \ref{thm:comp} re-proved via the prime-gap tables of \cite{OSHP}; the crossover constant in Theorem \ref{thm:bhp} corrected. v4: adds Section \ref{sec:pmin} (the minimal-anchor function $\pmin$ = OEIS A020483, the fixed-anchor density-zero theorem, Lavrik's almost-all theorem restated in the matrix-$B$ framework, and quantitative conjectures on $\pmin$), Table \ref{tab:pmin}, and the corresponding open problems (1$'$a)--(1$'$b). v5 (this version): adds Section \ref{sec:lower} (uniform union bound; unconditional lower bounds for $\pmin$ and for the least Goldbach prime), Proposition \ref{prop:new}, Table \ref{tab:anchor}; open problem (1$'$b) revised; all v4 results unchanged.
\end{document}